\documentclass[11pt]{amsart}

\usepackage{amssymb,mathtools}
\usepackage{microtype}
\usepackage{aliascnt}
\usepackage[hidelinks]{hyperref}
\hypersetup{
  pdftitle={Lattice balls with large additive energy in discrete cubes},
  pdfauthor={xinyu long},
  pdfsubject={Additive energy of high-dimensional lattice balls},
  pdfkeywords={additive energy, discrete cubes, lattice points, Euclidean balls, Gram matrices}
}

\allowdisplaybreaks
\numberwithin{equation}{section}

\newtheorem{theorem}{Theorem}[section]
\newaliascnt{proposition}{theorem}
\newtheorem{proposition}[proposition]{Proposition}
\aliascntresetthe{proposition}
\newaliascnt{lemma}{theorem}
\newtheorem{lemma}[lemma]{Lemma}
\aliascntresetthe{lemma}
\newaliascnt{corollary}{theorem}
\newtheorem{corollary}[corollary]{Corollary}
\aliascntresetthe{corollary}
\theoremstyle{remark}
\newaliascnt{remark}{theorem}
\newtheorem{remark}[remark]{Remark}
\aliascntresetthe{remark}

\newcommand{\R}{\mathbb R}
\newcommand{\Z}{\mathbb Z}
\newcommand{\one}{\mathbf 1}
\newcommand{\Ecal}{\mathcal E}
\newcommand{\Sym}{\operatorname{Sym}}
\newcommand{\tr}{\operatorname{tr}}
\newcommand{\vol}{\operatorname{vol}}

\title{Lattice balls with large additive energy in discrete cubes}
\author{xinyu long}
\address{School of Mathematics, Shandong University\\ Jinan, Shandong, China}
\email{longxinyu@sdu.edu.cn}

\subjclass[2020]{11B30, 52A40}
\keywords{Additive energy, discrete cubes, lattice points, Euclidean balls, Gram matrices}

\begin{document}

\begin{abstract}
For a finite set $A$ in an abelian group, let
\[
 E(A)=\#\{(a_1,a_2,a_3,a_4)\in A^4:a_1+a_2=a_3+a_4\}.
\]

We obtain an estimate uniform in $d$ that compares the normalized additive energy of $\mathbb{Z}^d \cap B_d(R)$ with the continuous energy of $B_d(R)$ .
If $R_d/\sqrt d\to\infty$, then
\[
 \lim_{d\to\infty}
 \left(
  \frac{E(\Z^d\cap B_d(R_d))}
       {|\Z^d\cap B_d(R_d)|^3}
 \right)^{1/d}
 =\frac{4\sqrt3}{9}.
\]
As an application, consider
\[
 A_n=R_n\one_d+\bigl(\Z^d\cap B_d(R_n)\bigr),
\]
where $d=d(n)\to\infty$ satisfy $\log d=o(\log n)$, and $R_n=\lfloor(n-1)/2\rfloor$.
Then $A_n\subset\{0,1,\ldots,n-1\}^d$ and
\[
 \log E(A_n)
 =3\log|A_n|-d\log\frac{3\sqrt3}{4}+o(d).
\]
In particular, taking $d=\lfloor(\log n)^{1/2}\rfloor$ gives an explicit construction answering a question of Shao \cite{Shao2026}. We also prove that in Gram-matrix coordinates, the exponential rate of the continuous ball energy is determined by a fixed dimensional determinant maximization whose extremizer is the Gram matrix of a regular tetrahedron.
\end{abstract}

\maketitle

\section{Introduction}

Let $A\subset G$ be a finite subset of an abelian group $G$. The additive energy of $A$ is defined by
\[
 E(A)=\#\{(a_1,a_2,a_3,a_4)\in A^4:a_1+a_2=a_3+a_4\}.
\]
Equivalently,
\[
 E(A)=\|\one_A*\one_A\|_2^2.
\]
Thus $E(A)$ measures the concentration of the representation function of the sumset $A+A$. In particular, Cauchy--Schwarz gives
\[
 E(A)\ge \frac{|A|^4}{|A+A|},
\]
so small doubling implies large additive energy. A partial converse is supplied by the Balog--Szemer\'edi--Gowers theorem: if $E(A)$ is comparable with the trivial upper bound $|A|^3$, then a large subset of $A$ has small doubling. Combined with Freiman-type inverse theorems, this places a large part of $A$ inside a generalized arithmetic progression of controlled rank and size; see, for example, \cite[Chapters~5 and~6]{TaoVu2006}. It is interesting that how much additive energy is compatible with additional geometric restrictions on the ambient set.

For $n\ge2$, Shao \cite{Shao2026} defines $t_n$ to be the smallest real number such that
\[
 E(A)\le |A|^{t_n}
\]
for every positive integer $d$ and every set
$A\subset\{0,1,\ldots,n-1\}^d$. For the binary cube, Kane and Tao \cite[Theorem~7]{KaneTao2017} proved that $t_2=\log_2 6.$ De Dios Pont, Greenfeld, Ivanisvili, and Madrid \cite{deDiosPontEtAl2023} developed a framework covering additive energies on discrete cubes, including higher energies and discrete extension inequalities. Their work also shows that the full product set need not be extremal once the alphabet has more than two elements, in particular,
\[
 t_3\ge 2\log_2(2.5664)>\log_3 19.
\]

Kova\v{c} \cite{Kovac2023} later proved the remaining inequality for binomial sums, thereby completing the sharp estimate for higher additive energies on the binary cube. More recently, Beker, Crmari\'c, and Kova\v{c} \cite{BekerCrmaricKovac2025} established analogous estimates for Gowers norms on discrete cubes, obtaining results for the binary cube and, for each fixed Gowers order, two-sided asymptotic estimates as the alphabet size tends to infinity.


For general $n$, Shao proves that, for some absolute constant $c>0$,
\[
 3-\bigl(1+o_{n\to\infty}(1)\bigr)
       \log_n\frac{3\sqrt3}{4}
 \le t_n\le 3-\log_n(1+c),
\]
where $\log_n x=(\log x)/(\log n)$.  In \cite[Question~2.4]{Shao2026}, Shao asks for an explicit set attaining the same exponent and suggests lattice points in a high-dimensional Euclidean ball as a candidate.In this paper, we verify this proposal and obtain an error bound that is uniform in the dimension.  

For $d\ge1$ and $R>0$, let $B_d(R)=\{x\in\R^d:\|x\|_2\le R\},$ $B_d=B_d(1),$ $v_d=\vol(B_d),$ then set $\Lambda_{d,R}=\Z^d\cap B_d(R).$ For a measurable set $S\subset\R^d$ of finite measure, define its continuous additive energy by
\[
 \Ecal(S)
 =\int_{(\R^d)^3}
   \one_S(x_1)\one_S(x_2)\one_S(x_3)
   \one_S(x_1+x_2-x_3)
   \,dx_1\,dx_2\,dx_3,
\]
for $e_d=\frac{\Ecal(B_d)}{v_d^3},$ Shao \cite{Shao2014} proved a continuous rearrangement theorem showing, in particular, that among compact sets of prescribed volume, $\Ecal(S)$ is maximized by Euclidean balls. Moreover, the normalized ball energy satisfies $e_d^{1/d}\longrightarrow \frac{4\sqrt{3}}{9}$, which is recorded in \cite{Shao2014}, computed in detail in \cite[Section~3.1]{Mazur2016}, and used in \cite[Section~2]{Shao2026} to motivate the lower bound for $t_n$.


The same constant also arises from the Hausdorff–Young inequality in Babenko \cite{Babenko1961} and Beckner \cite{Beckner1975}, with the Fourier transform normalized by

\[
 \widehat f(\xi)=\int_{\R^d}f(x)e^{-2\pi i x\cdot\xi}\,dx,
\]
at exponents $4/3$ and $4$, we have
\[
 \|\widehat f\|_4^4
 \le \left(\frac{4\sqrt3}{9}\right)^d\|f\|_{4/3}^4
 \qquad(f\in L^{4/3}(\R^d)).
\]
Equivalent forms of Young's convolution inequality were obtained by Brascamp and Lieb \cite{BrascampLieb1976} and Fournier \cite{Fournier1977}. Since $\Ecal(S)=\|\widehat{\one_S}\|_4^4$, applied to indicator functions, these inequalities yield $4\sqrt3/9$. The asymptotic for $B_d$ shows that the indicators of Euclidean balls attain the same exponential rate as $d \to \infty$.

For each fixed $d$, a standard lattice approximation shows that the normalized energy of $\Lambda_{d,R}$ converges to $e_d$ as $R\to\infty$. This fixed dimensional case, however, is insufficient for the discrete cube problem, where $d=d(n)$ and $R\asymp n$ tend to infinity simultaneously. In the present paper, by tiling $\R^d$ with unit cubes, we obtain a uniform estimate with logarithmic error $O(d^{3/2}/R)$.

\begin{theorem}\label{thm:lattice-ball}
Let $R_d>0$ satisfy $\frac{R_d}{\sqrt d}\longrightarrow\infty.$ Then
\[
 \lim_{d\to\infty}
 \left(
  \frac{E(\Lambda_{d,R_d})}{|\Lambda_{d,R_d}|^3}
 \right)^{1/d}
 =\frac{4\sqrt3}{9}.
\]
\end{theorem}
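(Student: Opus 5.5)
We compare the discrete energy with the continuous energy of slightly shrunk and enlarged balls, using the unit-cube tiling, and then invoke $e_d^{1/d}\to 4\sqrt3/9$.

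Let $Q=[-\tfrac12,\tfrac12)^d$, so the translates $z+Q$ with $z\in\Z^d$ tile $\R^d$, and note $\operatorname{diam} Q=\sqrt d$. Set $\rho=\sqrt d/2$ (the circumradius of $Q$) and $\Lambda=\Lambda_{d,R}$.

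\textbf{Upper bound.} Consider a quadruple $(z_1,z_2,z_3,z_4)\in\Lambda^4$ with $z_1+z_2=z_3+z_4$. For every $(u_1,u_2,u_3)\in Q^3$ put $x_i=z_i+u_i$. Then $x_1+x_2-x_3=z_4+(u_1+u_2-u_3)$, and $|u_1+u_2-u_3|\le 3\rho$. Hence all four points lie in $B_d(R+3\rho)$. The map from (quadruple, $u$) to $(x_1,x_2,x_3)$ is injective, because the $z_i$ are recovered as the cube centres. Therefore
\[
E(\Lambda)\le \Ecal(B_d(R+3\rho))=e_d\,v_d^3(R+3\rho)^{3d}.
\]

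\textbf{Lower bound.} Take $(x_1,x_2,x_3)$ with $x_1,x_2,x_3,x_1+x_2-x_3\in B_d(R-3\rho)$. Round each $x_i$ ($i\le 3$) to the lattice point $z_i$ whose cube contains it, and set $z_4=z_1+z_2-z_3$. Then $|z_i-x_i|\le\rho$ for $i\le3$, and $|z_4-(x_1+x_2-x_3)|\le3\rho$, so all $z_i\in\Lambda$ and we obtain an energy quadruple. Each fibre of this rounding map has measure $1$, so
\[
E(\Lambda)\ge \Ecal(B_d(R-3\rho))=e_d v_d^3(R-3\rho)^{3d}.
\]

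\textbf{Counting points.} By the same cube argument,
\[
v_d(R-\rho)^d\le|\Lambda|\le v_d(R+\rho)^d.
\]
Combining this with the two energy bounds gives
\[
e_d\Bigl(\frac{R-3\rho}{R+\rho}\Bigr)^{3d}\le\frac{E(\Lambda)}{|\Lambda|^3}\le e_d\Bigl(\frac{R+3\rho}{R-\rho}\Bigr)^{3d}.
\]
Taking $d$-th roots, the correction factors are $\bigl(1+O(\sqrt d/R)\bigr)^{3}$, which tend to $1$ exactly because $R_d/\sqrt d\to\infty$. Equivalently, the logarithmic error is $3d\cdot O(\sqrt d/R)=O(d^{3/2}/R)$, matching the error advertised in the introduction. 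Since $e_d^{1/d}\to4\sqrt3/9$ by \cite{Shao2014,Mazur2016}, the limit follows.

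\textbf{Main obstacle.} The delicate step is the lower bound: the rounding map must be measure-preserving onto quadruples, which needs the fibres to be exactly unit cubes. This holds because $(x_1,x_2,x_3)$ ranges over $\R^{3d}$ and is rounded coordinatewise, while $x_4$ is determined by the others and is not rounded independently. The factor $3\rho$ must also be checked against the case $R<3\rho$; this case never arises for large $d$, since $R_d/\sqrt d\to\infty$.
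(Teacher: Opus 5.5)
Your proposal is correct and follows the paper's proof essentially step for step: the unit-cube sandwich $\Ecal(B_d)(R-3\sqrt d/2)^{3d}\le E(\Lambda_{d,R})\le\Ecal(B_d)(R+3\sqrt d/2)^{3d}$ of Lemma~\ref{lem:energy-comparison}, the point count of Lemma~\ref{lem:count}, and the resulting two-sided ratio bound with logarithmic error $O(d^{3/2}/R)$ as in Proposition~\ref{prop:normalized-comparison}. The only difference is that you take $e_d^{1/d}\to4\sqrt3/9$ from Shao and Mazur, which is legitimate since the paper itself records it as known, whereas the paper re-derives it in Proposition~\ref{prop:continuous} via Gram-matrix coordinates, a Laplace principle, and a determinant maximization whose extremizer is the regular-tetrahedron Gram matrix $G_*$.
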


The following consequence gives a two-sided logarithmic asymptotic and therefore implies the lower bound requested in \cite[Question~2.4]{Shao2026}. 

\begin{corollary}\label{cor:construction}
Let $d = d(n) \to \infty$ with $\log d = o(\log n)$, and let $R_n=\left\lfloor\frac{n-1}{2}\right\rfloor,$ $A_n=R_n\one_d+\Lambda_{d,R_n},$
where $\one_d=(1,\ldots,1)\in\R^d$. Then $A_n \subset \{0,1,\dots,n-1\}^d$, and
\begin{equation}
 \label{eq:log-An}
 \log |A_n| = d \log n - \frac{d}{2} \log d + O(d),
\end{equation}
\begin{equation}
 \label{eq:log-EAn}
 \log E(A_n) = 3 \log |A_n| - d \log \frac{3\sqrt{3}}{4} + o(d).
\end{equation}

Consequently,
\[
 E(A_n)
 =|A_n|^{\,3-\frac{\log(3\sqrt3/4)+o(1)}{\log n}}
 =|A_n|^{\,3-(1+o(1))\log_n(3\sqrt3/4)}.
\]
In particular, one may take $d=\left\lfloor(\log n)^{1/2}\right\rfloor.$
\end{corollary}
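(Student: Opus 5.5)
We derive Corollary~\ref{cor:construction} from Theorem~\ref{thm:lattice-ball} together with a lattice-point count for $\Lambda_{d,R}$. We first dispose of the containment. If $x\in\Lambda_{d,R_n}$, then $|x_i|\le\|x\|_2\le R_n$ for every coordinate, so $R_n+x_i\in\{0,\dots,2R_n\}$. Since $2R_n\le n-1$, this gives $A_n\subset\{0,\dots,n-1\}^d$. Translation preserves additive energy and cardinality, so $E(A_n)=E(\Lambda_{d,R_n})$ and $|A_n|=|\Lambda_{d,R_n}|$.

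For \eqref{eq:log-EAn}, we first check the hypothesis of Theorem~\ref{thm:lattice-ball}. Because $\log d=o(\log n)$, we have $d\le n^{o(1)}$, so $R_n/\sqrt d\ge (n/3)\,n^{-o(1)}\to\infty$. Theorem~\ref{thm:lattice-ball} is phrased as a limit in $d$ with $R_d$ indexed by $d$. Since $d(n)$ need not be injective, we apply it along subsequences, or more simply argue by contradiction: a subsequence violating the conclusion yields, after passing to a further subsequence along which $d$ is strictly increasing, a sequence $R_d$ to which the theorem applies. This gives
\[
 \log E(\Lambda_{d,R_n})-3\log|\Lambda_{d,R_n}| = d\log\tfrac{4\sqrt3}{9}+o(d).
\]
Finally, $\log(4\sqrt3/9)=-\log(3\sqrt3/4)$, which is exactly \eqref{eq:log-EAn}.

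For \eqref{eq:log-An}, we sandwich the lattice count between two ball volumes, using the unit-cube tiling. Each unit cube $x+[-\tfrac12,\tfrac12]^d$ has diameter $\sqrt d/2$. Hence
\[
 v_d\,(R-\sqrt d/2)^d\le|\Lambda_{d,R}|\le v_d\,(R+\sqrt d/2)^d .
\]
Since $\sqrt d/R_n\to0$, we get $(1\pm\sqrt d/(2R_n))^d=e^{O(d^{3/2}/R_n)}$. This factor is $e^{o(d)}$, and indeed $e^{O(1)}$ once $d^{3/2}\ll n$; in any case it is $e^{O(d)}$. Stirling's formula gives $\log v_d=-\tfrac d2\log d+O(d)$, and $\log R_n=\log n+O(1)$. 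Together these give $\log|A_n|=d\log n-\tfrac d2\log d+O(d)$.

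For the consequence, we divide \eqref{eq:log-EAn} by $\log|A_n|$. By \eqref{eq:log-An} and $\log d=o(\log n)$, we have $\log|A_n|=d\log n\,(1+o(1))$. Therefore
\[
 \frac{d\log(3\sqrt3/4)+o(d)}{\log|A_n|}=\frac{\log(3\sqrt3/4)+o(1)}{\log n},
\]
which is the stated exponent. For $d=\lfloor(\log n)^{1/2}\rfloor$ we have $d\to\infty$ and $\log d\sim\tfrac12\log\log n=o(\log n)$, so this choice is admissible.

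The main obstacle is the transfer from Theorem~\ref{thm:lattice-ball}, which is stated as a $d$-indexed limit, to the $n$-indexed family with possibly repeated values of $d$. The subsequence argument above handles this. Everything else is routine volume bookkeeping.
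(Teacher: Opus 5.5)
Your proposal is correct and follows the paper's proof essentially step for step: containment via $|x_i|\le\|x\|_2\le R_n$, translation invariance, the unit-cube sandwich of Lemma~\ref{lem:count} combined with Stirling's formula for \eqref{eq:log-An}, and Theorem~\ref{thm:lattice-ball} for \eqref{eq:log-EAn}. Your subsequence argument for the possibly non-injective index $d(n)$ is a valid fix of a point the paper passes over; one could also apply Propositions~\ref{prop:normalized-comparison} and~\ref{prop:continuous} directly, since they hold for each $d$. One wording slip: the cube $[-\tfrac12,\tfrac12]^d$ has diameter $\sqrt d$, not $\sqrt d/2$, and what your sandwich actually (and correctly) uses is that every point of the cube lies within $\sqrt d/2$ of its centre.
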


After passing to the Gram matrix of three vectors, the continuous energy is expressed as an integral over a six-dimensional compact set. A finite dimensional Laplace principle then reduces the asymptotic to a determinant maximization, whose maximizer corresponds to a regular tetrahedron.


The paper is organized as follows.  Section~\ref{sec:continuous} establishes the continuous asymptotic, and Section~\ref{sec:lattice} proves the main results.

\section{The continuous energy of a Euclidean ball}\label{sec:continuous}

\subsection{Gram matrix coordinates}

Let $\Sym_3^{++}$ denote the cone of positive definite real symmetric $3\times3$ matrices. We write $dG=\prod_{1\le i\le j\le3}dg_{ij}$ for Lebesgue measure on the six-dimensional vector space of real symmetric $3\times3$ matrices.

\begin{lemma}\label{lem:gram}
Let $d\ge3$. There is a constant $C_d>0$ such that, for every nonnegative measurable function $F$ on $\Sym_3^{++}$,
\begin{equation}
 \label{eq:gram-int}
 \int_{\R^{d\times3}}F(X^{\mathsf T}X)\,dX
 =C_d\int_{\Sym_3^{++}}
 F(G)(\det G)^{(d-4)/2}\,dG.
\end{equation}
\end{lemma}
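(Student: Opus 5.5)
We will prove \eqref{eq:gram-int} by pushing Lebesgue measure on $\R^{d\times3}$ forward under the Gram map $\Phi(X)=X^{\mathsf T}X$. The plan is to factor $X$ as an orthonormal frame times a triangular matrix (Gram--Schmidt) and compute the Jacobian, rather than integrating over the Stiefel manifold abstractly. Fix $d\ge3$. The set of $X$ of rank $<3$ is a proper algebraic subset, hence Lebesgue-null. On the complement, $\Phi(X)\in\Sym_3^{++}$, so it suffices to integrate over full-rank $X$. For such $X$ there is a unique factorization $X=QT$, where $Q\in V_{d,3}$ (a $d\times3$ matrix with orthonormal columns) and $T$ is upper triangular with positive diagonal $t_{11},t_{22},t_{33}>0$. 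Then $\Phi(X)=T^{\mathsf T}T$, which is the Cholesky factorization of $G$. Cholesky is a diffeomorphism from such $T$ onto $\Sym_3^{++}$.

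\textbf{Step 1 (Jacobian of $X\mapsto(Q,T)$).} Write the columns as $x_1=t_{11}q_1$, $x_2=t_{12}q_1+t_{22}q_2$, $x_3=t_{13}q_1+t_{23}q_2+t_{33}q_3$, and integrate successively.
\begin{itemize}
\item[(a)] For $x_1\in\R^d$, polar coordinates give $dx_1=t_{11}^{d-1}\,dt_{11}\,d\sigma(q_1)$.
\item[(b)] For fixed $q_1$, decompose $x_2$ into its component $t_{12}$ along $q_1$ and its component orthogonal to $q_1$. Polar coordinates in $q_1^\perp\cong\R^{d-1}$ give $dx_2=dt_{12}\,t_{22}^{d-2}\,dt_{22}\,d\sigma_{d-2}(q_2)$.
\item[(c)] Similarly $dx_3=dt_{13}\,dt_{23}\,t_{33}^{d-3}\,dt_{33}\,d\sigma_{d-3}(q_3)$. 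This requires $d\ge3$ so that $q_3$ ranges over a sphere $S^{d-3}$ (for $d=3$, over two points).
\end{itemize}
Hence
\[
dX=t_{11}^{d-1}t_{22}^{d-2}t_{33}^{d-3}\,dT\,d\mu(Q),
\]
where $\mu$ is a finite measure on $V_{d,3}$ of total mass $c_d=|S^{d-1}||S^{d-2}||S^{d-3}|$.

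\textbf{Step 2 (Jacobian of Cholesky).} Compute the Jacobian of $T\mapsto G=T^{\mathsf T}T$ in the coordinates $g_{ij}$, $i\le j$. Order the variables as $(t_{11},t_{12},t_{13},t_{22},t_{23},t_{33})$ and the outputs as $(g_{11},g_{12},g_{13},g_{22},g_{23},g_{33})$. Since $g_{ij}=\sum_{k\le i}t_{ki}t_{kj}$, each $g_{ij}$ depends on $t_{ij}$ with coefficient $t_{ii}$ (coefficient $2t_{ii}$ when $i=j$), plus earlier variables only. The Jacobian matrix is therefore triangular, with determinant
\[
2^3\,t_{11}^3t_{22}^2t_{33}.
\]
So $dG=8\,t_{11}^3t_{22}^2t_{33}\,dT$.

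\textbf{Step 3 (combine).} Since $F(\Phi(X))$ is independent of $Q$, Tonelli gives
\[
\int_{\R^{d\times3}} F(X^{\mathsf T}X)\,dX=c_d\int F(T^{\mathsf T}T)\,t_{11}^{d-1}t_{22}^{d-2}t_{33}^{d-3}\,dT.
\]
Substituting $dT=dG/(8t_{11}^3t_{22}^2t_{33})$ leaves the weight $(t_{11}t_{22}t_{33})^{d-4}$. Since $\det G=(t_{11}t_{22}t_{33})^2$, this weight equals $(\det G)^{(d-4)/2}$, which yields \eqref{eq:gram-int} with $C_d=c_d/8$. Nonnegativity of $F$ justifies all uses of Tonelli, with no integrability hypotheses needed.

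The main obstacle is Step 1: making the successive polar decomposition rigorous, in particular parametrizing $q_2$ and $q_3$ in the varying subspaces $q_1^\perp$ and $\{q_1,q_2\}^\perp$. I would handle this by a fiberwise argument: for fixed $q_1$, choose a rotation sending $q_1$ to $e_1$. Lebesgue measure is rotation-invariant, so the inner integral does not depend on $q_1$, and iterating this gives the product structure. Alternatively, since only the existence of some constant $C_d$ is claimed, one can avoid exact constants. Verify only that the pushforward of $dX$ under $\Phi$ has a density $h(G)$ satisfying $h(A^{\mathsf T}GA)=|\det A|^{-d}h(G)$ for $A\in GL_3$, using the substitution $X\mapsto XA$ with Jacobian $|\det A|^3$, together with the fact that $dG\mapsto|\det A|^{4}dG$ under $G\mapsto A^{\mathsf T}GA$. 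Transitivity of the congruence action on $\Sym_3^{++}$ then forces $h=C_d(\det G)^{(d-4)/2}$. I would present this invariance argument as the main proof, since it is cleaner, and keep the Cholesky computation as a check of the exponent.
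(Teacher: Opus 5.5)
Your Steps 1--3 are correct and are exactly the paper's proof. Both use the QR factorization and successive polar coordinates to get the weight $t_{11}^{d-1}t_{22}^{d-2}t_{33}^{d-3}$, then the triangular Jacobian $8t_{11}^3t_{22}^2t_{33}$ of $T\mapsto T^{\mathsf T}T$, and finally $\det G=(t_{11}t_{22}t_{33})^2$. Rotating $q_1$ to $e_1$ is a reasonable way to justify the fibred polar coordinates, which the paper simply asserts.

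You say you would make the invariance argument the main proof. As written it is wrong, so do not swap it in without fixing it.

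\textbf{The Jacobian.} The map $X\mapsto XA$ on $\R^{d\times3}$ multiplies each of the $d$ rows of $X$ by $A$. Its Jacobian is therefore $|\det A|^{d}$, not $|\det A|^{3}$; the exponent $3$ belongs to left multiplication $X\mapsto BX$ with $B\in\mathrm{GL}_d(\R)$.

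\textbf{The transformation law.} Combining $|\det A|^{d}$ with the factor $|\det A|^{4}$ for $G\mapsto A^{\mathsf T}GA$ on $\Sym_3$ gives
\[
 h(A^{\mathsf T}GA)=|\det A|^{d-4}\,h(G),
\]
and this is the law that $(\det G)^{(d-4)/2}$ satisfies. The relation you state, $h(A^{\mathsf T}GA)=|\det A|^{-d}h(G)$, would by transitivity force $h\propto(\det G)^{-d/2}$, which contradicts your own conclusion. Your two stated Jacobians would instead give $h\propto(\det G)^{-1/2}$.

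\textbf{Two glossed points.} First, you need the density $h$ to exist. This holds because $\Phi$ is a submersion on full-rank $X$: $d\Phi_X(H)=X^{\mathsf T}H+H^{\mathsf T}X$ reaches any symmetric $S$ via $H=\tfrac12X(X^{\mathsf T}X)^{-1}S$. Second, you need to pass from a functional equation that holds only almost everywhere, for each fixed $A$, to the exact formula.

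\textbf{A clean fix.} Work with measures rather than densities. Let $\nu$ be the pushforward of $dX$ under $\Phi$. It is a Radon measure on $\Sym_3^{++}$, since $\Phi$ is proper there. It satisfies $\int F(A^{\mathsf T}GA)\,d\nu=|\det A|^{-d}\int F\,d\nu$. Hence $(\det G)^{-d/2}\,d\nu(G)$ is a $\mathrm{GL}_3(\R)$-invariant Radon measure on $\Sym_3^{++}\cong\mathrm{GL}_3(\R)/\mathrm{O}(3)$. By uniqueness it is a multiple of the invariant measure $(\det G)^{-2}\,dG$, which gives \eqref{eq:gram-int} directly.

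\textbf{Comparison.} Once corrected, the invariance route avoids the Stiefel parametrization. It relies on uniqueness of invariant measures on homogeneous spaces and does not produce $C_d$. The paper's route, your Steps 1--3, is elementary and gives $C_d$ explicitly. Only the positivity of $C_d$ matters, since it cancels in $e_d$.
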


\begin{proof}
The set of matrices $X\in\R^{d\times3}$ of rank less than three has Lebesgue measure zero. For a full-rank matrix, write the QR decomposition as
\[
 X=QR,
\]
where $Q=(q_1,q_2,q_3)$ has orthonormal columns and
\[
 R=
 \begin{pmatrix}
  r_{11}&r_{12}&r_{13}\\
  0&r_{22}&r_{23}\\
  0&0&r_{33}
 \end{pmatrix},
 \qquad r_{11},r_{22},r_{33}>0.
\]
Applying polar coordinates successively to the three columns gives
\[
 dX
 =C_d' r_{11}^{d-1}r_{22}^{d-2}r_{33}^{d-3}
 \,dR\,d\mu(Q),
\]
where $d\mu$ is the invariant measure on the Stiefel manifold of orthonormal $3$ frames and $C_d'>0$ depends only on $d$.

For $G=R^{\mathsf T}R$. In the order $(r_{11},r_{12},r_{13},r_{22},r_{23},r_{33}),$ a direct calculation gives
\[
 \left|
 \frac{\partial(g_{11},g_{12},g_{13},g_{22},g_{23},g_{33})}
      {\partial(r_{11},r_{12},r_{13},r_{22},r_{23},r_{33})}
 \right|
 =8r_{11}^3r_{22}^2r_{33}.
\]
Since
\[
 \det G=(r_{11}r_{22}r_{33})^2,
\]
we obtain
\[
 r_{11}^{d-1}r_{22}^{d-2}r_{33}^{d-3}\,dR
 =\frac18(\det G)^{(d-4)/2}\,dG.
\]
The integrand is independent of $Q$. Integrating over the Stiefel manifold and absorbing the resulting factor into $C_d$ proves the \eqref{eq:gram-int}.
\end{proof}

\subsection{Finite dimensional Laplace principle}

\begin{lemma}\label{lem:laplace}
Let $K\subset\R^m$ be compact and equal to the closure of its interior. Let $f:K\to[0,\infty)$ be continuous, and suppose that $\max_{x\in K}f(x)>0.$ Then
\[
 \lim_{a\to\infty}
 \left(\int_K f(x)^a\,dx\right)^{1/a}
 =\max_{x\in K}f(x).
\]
\end{lemma}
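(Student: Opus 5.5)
The plan is to prove matching upper and lower bounds. Set $M=\max_{x\in K}f(x)>0$, and write $I(a)=\int_K f(x)^a\,dx$ for $a>0$.

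For the upper bound, $0\le f\le M$ on $K$ gives $I(a)\le M^a\,|K|$, where $|K|<\infty$ is the Lebesgue measure of the compact set $K$. Hence $I(a)^{1/a}\le M|K|^{1/a}$. If $|K|>0$ then $|K|^{1/a}\to1$ as $a\to\infty$; if $|K|=0$ the bound is trivially $0\le M$. Either way,
\[
 \limsup_{a\to\infty}I(a)^{1/a}\le M .
\]

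For the lower bound, fix $\varepsilon\in(0,M)$. Since $f$ is continuous on the compact set $K$, the maximum is attained at some $x_0\in K$. By continuity there is $\delta>0$ with $f(x)>M-\varepsilon$ for all $x\in K\cap B(x_0,\delta)$. The hypothesis that $K$ is the closure of its interior is used exactly here: $x_0$ lies in the closure of $\operatorname{int}K$, so the open ball $B(x_0,\delta)$ meets $\operatorname{int}K$. Therefore $U=B(x_0,\delta)\cap\operatorname{int}K$ is a nonempty open set, and $|U|>0$. It follows that
\[
 I(a)\ge\int_U f^a\ge (M-\varepsilon)^a|U|,
\]
so $I(a)^{1/a}\ge (M-\varepsilon)|U|^{1/a}\to M-\varepsilon$. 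This gives $\liminf_{a\to\infty} I(a)^{1/a}\ge M-\varepsilon$. Letting $\varepsilon\downarrow0$ and combining with the upper bound proves the lemma.

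The only delicate step is guaranteeing that the near-maximal set has positive measure. A maximizer could sit at an isolated point of $K$, or on a lower-dimensional piece of $K$, and then the lower bound would fail. The regularity assumption $K=\overline{\operatorname{int}K}$ rules this out, and it also yields $|K|>0$, so the degenerate case in the upper bound never actually arises.
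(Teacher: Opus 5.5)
Your proof is correct and follows essentially the same route as the paper. The upper bound is the trivial estimate $I(a)\le |K|M^a$, and the lower bound comes from a nonempty open subset of $K$ on which $f>M-\varepsilon$, which exists because $K=\overline{\operatorname{int}K}$. The only cosmetic difference is that you start from a maximizer $x_0$ and intersect a small ball around it with $\operatorname{int}K$, whereas the paper first picks a near-maximizing interior point and then applies continuity there.
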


\begin{proof}
The upper bound follows from
\[
 \int_K f(x)^a\,dx\le |K|(\max_{x\in K}f(x))^a.
\]
For the lower bound, fix $0<\delta<(\max_{x\in K}f(x))/2$. Since $K$ is the closure of its interior, there is an interior point $x_\delta$ such that
\[
 f(x_\delta)>\max_{x\in K}f(x)-\delta.
\]
By continuity, there is an open set $U_\delta\subset K$ of positive measure on which $f>\max_{x\in K}f(x)-2\delta$. Hence
\[
 \int_K f(x)^a\,dx
 \ge |U_\delta|(\max_{x\in K}f(x)-2\delta)^a.
\]
Taking $a$-th roots, passing to liminf, and then letting $\delta\downarrow0$ completes the proof.
\end{proof}

\subsection{Determinant maximization}

Let $u=(1,1,1)^{\mathsf T}$ and define
\[
 \mathcal D
 =\{G\in\Sym_3:G\succeq0,\ g_{11},g_{22},g_{33}\le1\},
\]
\[
 \mathcal K
 =\{G\in\mathcal D:u^{\mathsf T}Gu\le1\}.
\]
Both sets are compact. Indeed, positive semidefiniteness gives
$|g_{ij}|\le\sqrt{g_{ii}g_{jj}}\le1$. $\mathcal D$, $\mathcal K$ are also the closures of their interiors: if $G$ belongs to either set, then
\[
 (1-\delta)G+\eta I_3,
 \qquad 0<3\eta<\delta,
\]
is positive definite and satisfies all the defining inequalities strictly, and it converges to $G$ as $\delta,\eta\to0$.

\begin{proposition}\label{prop:determinants}
We have
\begin{equation}
 \label{eq:det-D}
 \max_{G\in\mathcal D}\det G=1
\end{equation}
and
\begin{equation}
 \label{eq:det-K}
 \max_{G\in\mathcal K}\det G=\frac{16}{27}.
\end{equation}
The maximum in \eqref{eq:det-K} is attained at
\[
 G_*
 =\begin{pmatrix}
  1&-1/3&-1/3\\
  -1/3&1&-1/3\\
  -1/3&-1/3&1
 \end{pmatrix}.
\]
\end{proposition}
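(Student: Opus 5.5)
For \eqref{eq:det-D} I would use Hadamard's inequality: for $G\succeq0$ we have $\det G\le g_{11}g_{22}g_{33}\le1$ on $\mathcal D$. Equality holds at $G=I_3\in\mathcal D$. Nothing more is needed here.

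For \eqref{eq:det-K} I would work through eigenvalues. Write $s=u^{\mathsf T}Gu$ and $t=\tr G=g_{11}+g_{22}+g_{33}$, so that $t\le3$ and $s\le1$ on $\mathcal K$. Let $P=I_3-\tfrac13uu^{\mathsf T}$ be the orthogonal projection onto $u^\perp$, and pick an orthonormal basis $\{u/\sqrt3,w_1,w_2\}$. In this basis $G$ has $(1,1)$ entry $s/3$. The trace of its lower $2\times2$ block $H$ (the compression of $G$ to $u^\perp$) is therefore $t-s/3$.

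Fischer's inequality, i.e.\ Hadamard for the block decomposition, gives
\[
\det G\le \frac s3\det H.
\]
By AM--GM on the two eigenvalues of $H\succeq0$,
\[
\det H\le\Bigl(\frac{t-s/3}{2}\Bigr)^2.
\]
Combining these,
\[
\det G\le \frac{s}{3}\cdot\frac{(t-s/3)^2}{4}\le \frac{s(3-s/3)^2}{12}=:\phi(s),\qquad 0\le s\le1.
\]
The derivative is $\phi'(s)=\frac{1}{12}(3-s/3)(3-s)>0$ on $[0,1]$, so $\phi$ is increasing there. Hence
\[
\det G\le\phi(1)=\frac{(8/3)^2}{12}=\frac{64}{108}=\frac{16}{27}.
\]

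For attainment, check $G_*$ directly. Its diagonal entries equal $1$. Its row sums are $1/3$, so $u^{\mathsf T}G_*u=1$. The eigenvalues are $1/3$ on $u$ and $4/3$ (twice) on $u^\perp$, giving $\det G_*=\tfrac13\cdot\tfrac{16}{9}=\tfrac{16}{27}$ and $G_*\succ0$. Thus $G_*\in\mathcal K$ attains the bound.

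The only delicate step is the Fischer bound $\det G\le (s/3)\det H$ for a positive semidefinite matrix written in the rotated basis. It is standard, and the singular case is trivial since then $\det G=0$. Everything else is elementary one-variable calculus. I would also note, though it is not needed for the statement, that this chain of inequalities forces $s=1$, $t=3$, and $G$ block-diagonal with $H=\tfrac43I$, which identifies $G_*$ as the unique maximizer.
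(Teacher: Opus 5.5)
Your proof is correct. The Hadamard bound on $\mathcal D$ and the check that $G_*\in\mathcal K$ with $\det G_*=16/27$ match the paper, but your upper bound on $\mathcal K$ takes a different route.

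The paper symmetrizes. It averages $G$ over simultaneous row/column permutations and uses convexity and permutation invariance of $\mathcal K$ together with concavity of $\log\det$ (Jensen) to get $\det\overline G\ge\det G$. It then reads off the eigenvalues of the two-parameter matrix $\overline G$.

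You instead pass to an orthonormal basis adapted to $u$. You apply Fischer's inequality to the $1+2$ block splitting and then AM--GM to the compressed block $H$. This buys the following:
\begin{itemize}
\item it avoids Jensen, the convexity of $\mathcal K$ and the reduction step;
\item it shows directly that only $\tr G\le3$ and $u^{\mathsf T}Gu\le1$ are used;
\item its equality cases immediately give uniqueness of the maximizer $G_*$.
\end{itemize}
The paper's route, for its part, exploits the visible $S_3$ symmetry of $\mathcal K$ without choosing a basis or citing Fischer.

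The two arguments meet at the end. Your $s/3$ is the paper's eigenvalue $s$, and your $\phi(s)$ is exactly the paper's $s(3-s)^2/4$ after that substitution.

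Your two steps are in fact averaging arguments as well. Fischer's bound amounts to $\det G\le\det\tfrac12(G+RGR)$ for the reflection $R=I_3-\tfrac23uu^{\mathsf T}$. AM--GM on $H$ amounts to averaging over rotations of $u^\perp$. So you are symmetrizing over the larger group of orthogonal maps preserving the line $\R u$. That is available because $\tr G$ and $u^{\mathsf T}Gu$ are invariant under this group, while the individual constraints $g_{ii}\le1$ are not; that is why the paper, keeping $\mathcal K$ intact, works only with permutations.
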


\begin{proof}
For $G\in\mathcal D$, Hadamard's determinant inequality gives
\[
 \det G\le g_{11}g_{22}g_{33}\le1,
\]
with equality at $G=I_3$.

We now consider $\mathcal K$. The matrix $G_*$ is positive definite: its eigenvalue in the direction of $u$ is $1/3$, and its two eigenvalues on $u^\perp$ are $4/3$. Moreover, $u^{\mathsf T}G_*u=1,$ then
\[
 \det G_*=\frac13\left(\frac43\right)^2=\frac{16}{27}.
\]

It remains to prove the upper bound. If $G\in\mathcal K$ is singular, then $\det G=0$. We may therefore assume that $G$ is positive definite. For each $\sigma\in S_3$, let $P_\sigma$ be the corresponding permutation matrix and let
\[
 \overline G
 =\frac16\sum_{\sigma\in S_3}P_\sigma GP_\sigma^{\mathsf T}.
\]
The set $\mathcal K$ is convex and invariant under simultaneous permutations of rows and columns, so $\overline G\in\mathcal K$. The function $G\mapsto\log\det G$ is concave on the positive definite cone. Indeed, if $H(t)=(1-t)A+tB$, then
\[
 \frac{d^2}{dt^2}\log\det H(t)
 =-\tr\bigl(H(t)^{-1}(B-A)H(t)^{-1}(B-A)\bigr)\le0.
\]
It follows from Jensen's inequality that
\[
 \det\overline G\ge\det G.
\]
Thus it is enough to consider a permutation-invariant matrix
\[
 \overline G
 =\begin{pmatrix}
  a&b&b\\
  b&a&b\\
  b&b&a
 \end{pmatrix}.
\]
Let $s=a+2b$. The eigenvalue in the direction of $u$ is $s$, and the other two eigenvalues are
\[
 a-b=\frac{3a-s}{2}.
\]
Since $\overline G\in\mathcal K$ is positive definite, $0<s\le\frac13,$ $a\le1,$ $3a>s.$ Moreover,
\[
 \det\overline G
 =s\left(\frac{3a-s}{2}\right)^2.
\]
For fixed $s$, this expression is increasing in $a$ on the admissible range. Hence
\[
 \det\overline G
 \le\frac{s(3-s)^2}{4}.
\]
The function on the right is increasing for $0<s\le1/3$, since
\[
 \frac{d}{ds}\frac{s(3-s)^2}{4}
 =\frac{3(3-s)(1-s)}4>0.
\]
Consequently,
\[
 \det G\le\det\overline G
 \le\frac{(1/3)(3-1/3)^2}{4}
 =\frac{16}{27}.
\]
\end{proof}

\begin{remark}\label{rem:tetrahedron}
The matrix $G_*$ has the following geometric interpretation. Let $x_1,x_2,x_3$ have Gram matrix $G_*$ and let
\[
 x_4=-(x_1+x_2+x_3).
\]
Then $\|x_i\|_2=1$ for every $1\le i\le4$, and
\[
 \langle x_i,x_j\rangle=-\frac13
 \qquad(i\ne j).
\]
Thus $x_1,x_2,x_3,x_4$ are the vertices of a regular tetrahedron centred at the origin. 
\end{remark}

\subsection{Asymptotics of the continuous energy}

\begin{proposition}\label{prop:continuous}
As $d\to\infty$,
\[
 \lim_{d\to\infty}e_d^{1/d}
 =\frac{4\sqrt3}{9}.
\]
Equivalently,
\[
 \log e_d
 =d\log\frac{4\sqrt3}{9}+o(d).
\]
\end{proposition}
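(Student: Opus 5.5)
Write $\Ecal(B_d)$ as an integral over $X=(x_1,x_2,x_3)\in\R^{d\times3}$ of $F(X^{\mathsf T}X)$, where $F(G)$ is the indicator that $g_{11},g_{22},g_{33}\le1$ and $\|x_1+x_2-x_3\|^2\le1$. The last quantity is $w^{\mathsf T}Gw$ with $w=(1,1,-1)^{\mathsf T}$. Substituting $x_3\mapsto -x_3$ (a measure-preserving change of variables) turns the constraint into $u^{\mathsf T}Gu\le1$. Lemma~\ref{lem:gram} then gives, for $d\ge4$,
\[
 \Ecal(B_d)=C_d\int_{\mathcal K}(\det G)^{(d-4)/2}\,dG .
\]
Applying the same lemma to $F=\one_{\mathcal D}$ gives
\[
 v_d^3=\int_{\R^{d\times3}}\one_{\mathcal D}(X^{\mathsf T}X)\,dX=C_d\int_{\mathcal D}(\det G)^{(d-4)/2}\,dG .
\]
The unknown constant $C_d$ therefore cancels in the ratio:
\[
 e_d=\frac{\int_{\mathcal K}(\det G)^{(d-4)/2}\,dG}{\int_{\mathcal D}(\det G)^{(d-4)/2}\,dG}.
\]

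Next apply Lemma~\ref{lem:laplace} to the numerator and the denominator separately. Take $f=\det$, which is continuous and nonnegative on the compact sets $\mathcal K$ and $\mathcal D$. Both sets are closures of their interiors, as noted before Proposition~\ref{prop:determinants}. Use the exponent $a=(d-4)/2\to\infty$. By Proposition~\ref{prop:determinants},
\[
 \Bigl(\int_{\mathcal K}(\det G)^{a}\,dG\Bigr)^{1/a}\to\frac{16}{27},
 \qquad
 \Bigl(\int_{\mathcal D}(\det G)^{a}\,dG\Bigr)^{1/a}\to1 .
\]
Hence $e_d^{1/a}\to 16/27$. Since $a/d\to1/2$, we get $e_d^{1/d}=(e_d^{1/a})^{a/d}\to(16/27)^{1/2}=4/(3\sqrt3)=4\sqrt3/9$. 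Equivalently, $\log e_d=\tfrac{d-4}{2}\log\tfrac{16}{27}+o(d)=d\log\tfrac{4\sqrt3}{9}+o(d)$.

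The steps that need care are these:
\begin{itemize}
 \item Check that the sign change $x_3\mapsto -x_3$ turns the constraint into the form $u^{\mathsf T}Gu\le1$ used to define $\mathcal K$.
 \item Confirm that Lemma~\ref{lem:gram} applies to indicators supported on the positive definite cone. Singular Gram matrices form a null set on both sides, so this is fine.
 \item Ensure $a>0$, i.e.\ $d\ge5$, so the integrands are bounded and Lemma~\ref{lem:laplace} applies. This only affects finitely many $d$ and is irrelevant for the limit.
\end{itemize}
I expect no serious obstacle; the cancellation of $C_d$ through the normalization by $v_d^3$ is the key idea.
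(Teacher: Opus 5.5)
Your proposal is correct and follows essentially the same route as the paper's proof. Both arguments use the sign change $x_3\mapsto -x_3$, pass to Gram-matrix coordinates via Lemma~\ref{lem:gram} so that $C_d$ cancels in $e_d$, and then apply Lemma~\ref{lem:laplace} with $a=(d-4)/2$ together with Proposition~\ref{prop:determinants}. The only discrepancy, stating $d\ge4$ in one place and $d\ge5$ in another, does not affect the limit.
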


\begin{proof}
Since $B_d=-B_d$, the change of variables $y_3=-x_3$ gives
\[
 \Ecal(B_d)
 =\int_{B_d^3}\one_{B_d}(x_1+x_2+y_3)
   \,dx_1\,dx_2\,dy_3.
\]
Let $X\in\R^{d\times3}$ have columns $x_1,x_2,y_3$, and put $G=X^{\mathsf T}X$. The conditions $x_1,x_2,y_3\in B_d$ are equivalent to $g_{11},g_{22},g_{33}\le1,$ while
\[
 \|x_1+x_2+y_3\|_2^2=u^{\mathsf T}Gu.
\]
For $d\ge5$, Lemma~\ref{lem:gram} gives
\[
 \Ecal(B_d)
 =C_d\int_{\mathcal K}(\det G)^{(d-4)/2}\,dG
\]
and
\[
 v_d^3
 =C_d\int_{\mathcal D}(\det G)^{(d-4)/2}\,dG.
\]
The singular boundaries of $\mathcal D$ and $\mathcal K$ have Lebesgue measure zero, so  replacing the positive definite domains by their compact closures does not change the integrals, because the singular boundaries have Lebesgue measure zero.

Let $a_d=(d-4)/2$. The constants $C_d$ cancel, and Lemma~\ref{lem:laplace} together with Proposition~\ref{prop:determinants} gives
\[
 \lim_{d\to\infty}
 \left(\int_{\mathcal D}(\det G)^{a_d}\,dG\right)^{1/a_d}=1
\]
and
\[
 \lim_{d\to\infty}
 \left(\int_{\mathcal K}(\det G)^{a_d}\,dG\right)^{1/a_d}
 =\frac{16}{27}.
\]
Since $a_d/d\to1/2$,
\[
 \lim_{d\to\infty}\frac1d\log e_d
 =\frac12\log\frac{16}{27}
 =\log\frac{4\sqrt3}{9}.
\]
\end{proof}

\section{Uniform comparison for lattice points in Euclidean balls}\label{sec:lattice}

Set $Q_d=[-1/2,1/2)^d.$ The translates $a+Q_d$, $a\in\Z^d$, form a disjoint partition of $\R^d$, and every point of $Q_d$ has Euclidean norm at most $\sqrt d/2$.

\begin{lemma}\label{lem:count}
If $R>\sqrt d/2$, then
\begin{equation}
 \label{eq:ball-count}
 v_d\left(R-\frac{\sqrt d}{2}\right)^d
 \le |\Lambda_{d,R}|
 \le
 v_d\left(R+\frac{\sqrt d}{2}\right)^d.
\end{equation}
\end{lemma}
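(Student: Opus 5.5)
The plan is the standard cube-tiling comparison. I will set $S=\bigcup_{a\in\Lambda_{d,R}}(a+Q_d)$; because the translates $a+Q_d$ with $a\in\Z^d$ are pairwise disjoint and each has volume one, $\vol(S)=|\Lambda_{d,R}|$. Both inequalities in \eqref{eq:ball-count} will then come from two inclusions:
\[
 B_d\!\left(R-\tfrac{\sqrt d}{2}\right)\subset S\subset B_d\!\left(R+\tfrac{\sqrt d}{2}\right),
\]
together with $\vol(B_d(r))=v_d r^d$ and monotonicity of Lebesgue measure.

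For the upper inclusion, take $x\in S$. Then $x=a+q$ with $a\in\Lambda_{d,R}$ and $q\in Q_d$. The triangle inequality gives $\|x\|_2\le\|a\|_2+\|q\|_2\le R+\sqrt d/2$, using that every point of $Q_d$ has norm at most $\sqrt d/2$.

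For the lower inclusion, take $x$ with $\|x\|_2\le R-\sqrt d/2$. Since the translates partition $\R^d$, there is a unique $a\in\Z^d$ with $x\in a+Q_d$, namely $a$ obtained by rounding each coordinate of $x$. Then $a=x-q$ for some $q\in Q_d$. Hence $\|a\|_2\le\|x\|_2+\|q\|_2\le R$, so $a\in\Lambda_{d,R}$ and $x\in S$. The hypothesis $R>\sqrt d/2$ is needed only so that the inner radius $R-\sqrt d/2$ is positive and the lower bound is meaningful.

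No step is a real obstacle. The only point to check is that the partition property gives exactly $\vol(S)=|\Lambda_{d,R}|$ with no overlaps. The half-open convention $Q_d=[-1/2,1/2)^d$ ensures this, and in any case cube boundaries have measure zero.
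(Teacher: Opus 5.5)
Your proposal is correct and is essentially the paper's own proof: the paper also sandwiches the union of unit cubes $a+Q_d$, $a\in\Lambda_{d,R}$, between $B_d(R-\sqrt d/2)$ and $B_d(R+\sqrt d/2)$ using the triangle inequality, and then compares volumes. Like you, it uses the fact that the half-open translates $a+Q_d$, $a\in\Z^d$, are pairwise disjoint with volume one, so the union has volume exactly $|\Lambda_{d,R}|$.
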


\begin{proof}
Let
\[
 U_{d,R}=\bigcup_{a\in\Lambda_{d,R}}(a+Q_d).
\]
If $x\in B_d(R-\sqrt d/2)$ and $x\in a+Q_d$, then
\[
 \|a\|_2\le\|x\|_2+\|a-x\|_2\le R,
\]
so $a\in\Lambda_{d,R}$. Conversely, if $x\in a+Q_d$ with $a\in\Lambda_{d,R}$, then
\[
 \|x\|_2\le R+\frac{\sqrt d}{2}.
\]
Thus
\[
 B_d\left(R-\frac{\sqrt d}{2}\right)
 \subset U_{d,R}
 \subset
 B_d\left(R+\frac{\sqrt d}{2}\right).
\]
The cubes in $U_{d,R}$ are disjoint and have volume one. Taking volumes proves \eqref{eq:ball-count}.
\end{proof}

\begin{lemma}\label{lem:energy-comparison}
If $R>3\sqrt d/2$, then
\[
 \Ecal(B_d)\left(R-\frac{3\sqrt d}{2}\right)^{3d}
 \le E(\Lambda_{d,R})
 \le
 \Ecal(B_d)\left(R+\frac{3\sqrt d}{2}\right)^{3d}.
\]
\end{lemma}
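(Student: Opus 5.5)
We tile by unit cubes exactly as in Lemma~\ref{lem:count}, now in the six-fold space of triples $(x_1,x_2,x_3)$. Write $\Lambda=\Lambda_{d,R}$. Using $Q_d=[-1/2,1/2)^d$, we associate to each lattice quadruple the region
\[
 \{(x_1,x_2,x_3)\in(\R^d)^3 : x_i\in a_i+Q_d,\ i=1,2,3\},
\]
where $a_1+a_2=a_3+a_4$ with $a_1,\dots,a_4\in\Lambda$. Since $a_4=a_1+a_2-a_3$ is determined by $(a_1,a_2,a_3)$, these regions are disjoint unit-volume cubes in $\R^{3d}$. Hence $E(\Lambda)$ equals the $3d$-dimensional volume of their union,
\[
 W=\bigcup_{\substack{a_1,a_2,a_3\in\Lambda\\ a_1+a_2-a_3\in\Lambda}}(a_1+Q_d)\times(a_2+Q_d)\times(a_3+Q_d).
\]
The proof then reduces to two inclusions,
\[
 S(R-3\sqrt d/2)\subset W\subset S(R+3\sqrt d/2),
\]
where $S(r)=\{(x_1,x_2,x_3):x_1,x_2,x_3,\,x_1+x_2-x_3\in B_d(r)\}$. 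Once these hold, scaling gives $\vol_{3d}S(r)=r^{3d}\Ecal(B_d)$, since $\Ecal(B_d)=\vol_{3d}S(1)$ and the substitution $x_i=ry_i$ has Jacobian $r^{3d}$. Taking volumes then yields the lemma.

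For the upper inclusion, take $(x_1,x_2,x_3)\in W$ with $x_i=a_i+q_i$ and $q_i\in Q_d$. Each $\|x_i\|_2\le R+\sqrt d/2$, and
\[
 x_1+x_2-x_3=a_4+(q_1+q_2-q_3),
\]
so $\|x_1+x_2-x_3\|_2\le R+3\sqrt d/2$. Every defining condition of $S(R+3\sqrt d/2)$ therefore holds.

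For the lower inclusion, take $(x_1,x_2,x_3)\in S(R-3\sqrt d/2)$. Let $a_i$ be the unique lattice point with $x_i\in a_i+Q_d$, and set $a_4=a_1+a_2-a_3$. Then $\|a_i\|_2\le\|x_i\|_2+\sqrt d/2\le R$ for $i\le3$. Also
\[
 \|a_4\|_2\le\|x_1+x_2-x_3\|_2+3\sqrt d/2\le R,
\]
because $a_4-(x_1+x_2-x_3)=-(q_1+q_2-q_3)$ has norm at most $3\sqrt d/2$. All four $a_i$ therefore lie in $\Lambda$, and the point lies in $W$.

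The only delicate point is the bookkeeping that makes $E(\Lambda)$ equal to $\vol(W)$ exactly. The cubes are indexed by triples, so we must check they are disjoint, which follows from the partition property of $a+Q_d$ in each factor, and that the fourth vertex is slaved to the first three. The loss $3\sqrt d/2$ comes only from the fourth coordinate, whose error is a sum of three cube displacements. The hypothesis $R>3\sqrt d/2$ is needed only so that the lower radius is positive.
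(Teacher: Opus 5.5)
Your proof is correct and is essentially the paper's argument. You use the same disjoint unit product cubes $(a_1+Q_d)\times(a_2+Q_d)\times(a_3+Q_d)$ indexed by triples with $a_1+a_2-a_3\in\Lambda_{d,R}$, the same sandwich $S(R-3\sqrt d/2)\subset W\subset S(R+3\sqrt d/2)$ obtained from the triangle inequality, and the same scaling identity $\vol_{3d}S(r)=r^{3d}\Ecal(B_d)$. The only slip is verbal: the ambient space is the threefold product $(\R^d)^3=\R^{3d}$, not a ``six-fold'' one.
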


\begin{proof}
Write
\[
 T_{d,R}
 =\{(a,b,c)\in(\Z^d)^3:
       a,b,c,a+b-c\in\Lambda_{d,R}\}
\]
and
\[
 U_{d,R}^{(3)}
 =\bigcup_{(a,b,c)\in T_{d,R}}
   (a+Q_d)\times(b+Q_d)\times(c+Q_d).
\]
The product cubes are disjoint and have volume one, so
\[
 \vol_{3d}(U_{d,R}^{(3)})=E(\Lambda_{d,R}).
\]
For $\rho\ge0$, define
\[
 S_{d,\rho}
 =\{(x,y,z)\in(\R^d)^3:
   \|x\|_2,\|y\|_2,\|z\|_2,\|x+y-z\|_2\le\rho\}.
\]
By scaling,
\[
 \vol_{3d}(S_{d,\rho})=\Ecal(B_d)\rho^{3d}.
\]

We claim that
\[
 S_{d,R-3\sqrt d/2}
 \subset U_{d,R}^{(3)}
 \subset S_{d,R+3\sqrt d/2}.
\]
For the first inclusion, take $(x,y,z)\in S_{d,R-3\sqrt d/2}$ and choose $a,b,c\in\Z^d$ with $x\in a+Q_d,$ $y\in b+Q_d,$ $z\in c+Q_d.$ Then
\[
 \|a-x\|_2,\|b-y\|_2,\|c-z\|_2\le\frac{\sqrt d}{2}.
\]
It follows that $\|a\|_2,\|b\|_2,\|c\|_2\le R$ and
\[
 \|a+b-c\|_2
 \le\|x+y-z\|_2
     +\|a-x\|_2+\|b-y\|_2+\|c-z\|_2
 \le R.
\]
Hence $(a,b,c)\in T_{d,R}$.

For the reverse inclusion, let $(x,y,z)$ lie in a product cube indexed by $(a,b,c)\in T_{d,R}$. Then
\[
 \|x\|_2,\|y\|_2,\|z\|_2
 \le R+\frac{\sqrt d}{2}
 \le R+\frac{3\sqrt d}{2},
\]
and
\[
 \|x+y-z\|_2
 \le\|a+b-c\|_2
     +\|x-a\|_2+\|y-b\|_2+\|z-c\|_2
 \le R+\frac{3\sqrt d}{2}.
\]
Taking volumes proves the two inequalities.
\end{proof}

\begin{proposition}\label{prop:normalized-comparison}
If $R\ge3\sqrt d$, then
\[
 \left|
 \log\frac{E(\Lambda_{d,R})}{|\Lambda_{d,R}|^3}
 -\log e_d
 \right|
 \le 12\frac{d^{3/2}}R.
\]
\end{proposition}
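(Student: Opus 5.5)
The plan is to combine Lemma~\ref{lem:count} and Lemma~\ref{lem:energy-comparison} directly. Since $R\ge3\sqrt d>3\sqrt d/2$, both lemmas apply. Dividing the upper bound of Lemma~\ref{lem:energy-comparison} by the cube of the lower bound in Lemma~\ref{lem:count}, and using $e_d=\Ecal(B_d)/v_d^3$, we get
\[
 \log\frac{E(\Lambda_{d,R})}{|\Lambda_{d,R}|^3}-\log e_d
 \le 3d\log\Bigl(R+\tfrac{3\sqrt d}{2}\Bigr)-3d\log\Bigl(R-\tfrac{\sqrt d}{2}\Bigr).
\]
Symmetrically, the lower bound of Lemma~\ref{lem:energy-comparison} over the cube of the upper bound in Lemma~\ref{lem:count} gives
\[
 \log\frac{E(\Lambda_{d,R})}{|\Lambda_{d,R}|^3}-\log e_d
 \ge 3d\log\Bigl(R-\tfrac{3\sqrt d}{2}\Bigr)-3d\log\Bigl(R+\tfrac{\sqrt d}{2}\Bigr).
\]
So it suffices to bound both right-hand sides in absolute value by $12d^{3/2}/R$.

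Set $t=\sqrt d/R\le 1/3$. The upper side equals
\[
 3d\bigl[\log(1+\tfrac32 t)-\log(1-\tfrac12 t)\bigr].
\]
Using $\log(1+x)\le x$ and $-\log(1-x)\le x/(1-x)\le \tfrac65x$ for $x=t/2\le 1/6$, this is at most $3d(\tfrac32t+\tfrac35t)=\tfrac{63}{10}dt$.

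The lower side equals
\[
 -3d\bigl[\log(1+\tfrac12t)-\log(1-\tfrac32t)\bigr].
\]
Its absolute value is at most $3d\bigl(\tfrac12t+\tfrac{3t/2}{1-3t/2}\bigr)$. Since $3t/2\le1/2$, we have $\tfrac{3t/2}{1-3t/2}\le 3t$, so this is at most $3d\cdot\tfrac72 t=\tfrac{21}{2}dt$.

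Both bounds are at most $12\,dt=12d^{3/2}/R$. The only care needed is in the lower side, where the hypothesis $R\ge3\sqrt d$ is what keeps $1-\tfrac32t\ge\tfrac12$ and so controls the factor $1/(1-x)$. There is no real obstacle beyond tracking these elementary constants.
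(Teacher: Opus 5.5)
Your proof is correct and is essentially the paper's argument. You combine Lemmas~\ref{lem:count} and~\ref{lem:energy-comparison} into a two-sided bound for the normalized energy, set $t=\sqrt d/R\le 1/3$, and control the logarithms with elementary inequalities. The only cosmetic difference is that you use $-\log(1-x)\le x/(1-x)$ where the paper uses $-\log(1-u)\le 2u$ for $0\le u\le 1/2$; either way the binding lower-side constant is $\tfrac{21}{2}\le 12$.
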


\begin{proof}
Combining Lemmas~\ref{lem:count} and \ref{lem:energy-comparison} gives
\[
 e_d
 \left(
  \frac{R-3\sqrt d/2}{R+\sqrt d/2}
 \right)^{3d}
 \le
 \frac{E(\Lambda_{d,R})}{|\Lambda_{d,R}|^3}
 \le
 e_d
 \left(
  \frac{R+3\sqrt d/2}{R-\sqrt d/2}
 \right)^{3d}.
\]
Put $x=\sqrt d/R$, so that $0\le x\le1/3$. Using
\[
 \log(1+u)\le u,
 \qquad
 -\log(1-u)\le2u
 \quad(0\le u\le1/2),
\]
the logarithmic error on either side is at most
\[
 12dx=12\frac{d^{3/2}}R.
\]
\end{proof}

\begin{proof}[Proof of Theorem~\ref{thm:lattice-ball}]
For all sufficiently large $d$, Proposition~\ref{prop:normalized-comparison} applies. Dividing its estimate by $d$ gives
\[
 \frac1d\log\frac{E(\Lambda_{d,R_d})}{|\Lambda_{d,R_d}|^3}
 =\frac1d\log e_d
  +O\left(\frac{\sqrt d}{R_d}\right).
\]
The error tends to zero by assumption, and Proposition~\ref{prop:continuous} gives
\[
 \frac1d\log e_d\longrightarrow\log\frac{4\sqrt3}{9}.
\]
Exponentiating proves the theorem.
\end{proof}

\begin{proof}[Proof of Corollary~\ref{cor:construction}]
Every $z=(z_1,\ldots,z_{d(n)})\in\Lambda_{d(n),R_n}$ satisfies
$|z_i|\le R_n$ for every $1\le i\le d(n)$. Hence every coordinate of
$R_n\one_{d(n)}+z$ lies in $\{0,1,\ldots,2R_n\}$, and
$2R_n\le n-1$. Thus
\[
 A_n\subset\{0,1,\ldots,n-1\}^{d(n)}.
\]
Translation preserves cardinality and additive energy, so
\[
 |A_n|=|\Lambda_{d(n),R_n}|,
 \qquad
 E(A_n)=E(\Lambda_{d(n),R_n}).
\]

By Lemma~\ref{lem:count},
\[
 \log |A_n|
 =\log v_{d(n)}+d(n)\log R_n
  +O\left(\frac{(d(n))^{3/2}}{R_n}\right).
\]
Stirling's formula gives
\[
 \log v_{d(n)}
 =-\frac{d(n)}2\log d(n)+O(d(n)),
\]
while
\[
 \log R_n=\log n-\log 2+O(n^{-1}).
\]
Since $\log d(n)=o(\log n)$, we have $d(n)=n^{o(1)}$ and hence
$\sqrt{d(n)}/n\to0$. Therefore,
\[
 \log |A_n|
 =d(n)\log n-\frac{d(n)}2\log d(n)+O(d(n)).
\]
In particular,
\begin{equation}
 \label{eq:log-An-asymp}
 \log |A_n|=(1+o(1))d(n)\log n.
\end{equation}

Moreover, $R_n/\sqrt{d(n)}\to\infty$. Theorem~\ref{thm:lattice-ball},
together with \eqref{eq:log-An-asymp}, therefore implies
\[
 \log\frac{E(A_n)}{|A_n|^3}
 =d(n)\log\frac{4\sqrt3}{9}+o(d(n))
 =-d(n)\log\frac{3\sqrt3}{4}+o(d(n)).
\]
Equivalently,
\[
 \log E(A_n)
 =3\log|A_n|-d(n)\log\frac{3\sqrt3}{4}+o(d(n)).
\]
Finally,
\[
 \begin{aligned}
 \frac{\log E(A_n)}{\log |A_n|}
 &=3-\frac{d(n)\log(3\sqrt3/4)+o(d(n))}{\log |A_n|}\\
 &=3-\frac{\log(3\sqrt3/4)+o(1)}{\log n}\\
 &=3-\bigl(1+o(1)\bigr)\log_n\frac{3\sqrt3}{4}.
 \end{aligned}
\]
Exponentiating gives
\[
 E(A_n)
 =|A_n|^{\,3-(1+o(1))\log_n(3\sqrt3/4)}.
\]
\end{proof}

%
%

\end{document}